\newcommand{\change}[1]{{{#1}}}
\newlength{\fixboxwidth}     
\newcommand{\R} {{\mathbb R}}   
\newcommand{\N} {{\mathbb N}}
\newcommand{\e}{\varepsilon }       
\newcommand{\eps}{\varepsilon}
\newcommand\Lip{{\rm Lip}}
\newcommand\lall{\Lambda^{{\rm all}}}     
\newcommand\lstd{\Lambda^{{\rm std}}}     
\newcommand{\wt}{\widetilde }
\newcommand\INT{{\rm INT}}     
\newcommand\APP{{\rm APP}}
\renewcommand{\epsilon}{\varepsilon}
\newcommand{\novakwidebar}[1]{\mbox{\kern1.5pt\hbox{\vbox{\hrule 
height 0.6pt \kern0.35ex
\hbox{\kern-0.15em \ensuremath{#1 }\kern0.0em}}}}\kern-0.1pt}
\theoremstyle{plain}       
\newtheorem{thm}{Theorem}   
\newtheorem{cor}{Corollary}      
\newtheorem{prop}{Proposition}      
\theoremstyle{definition}     
\newtheorem{example}{Example}     
\newtheorem{rem}{Remark}
\renewcommand\:{\colon}     
\begin{document}     
     
\title{\bf Algorithms and Complexity for Functions on 
General Domains}

\author{Erich Novak} 
 
\maketitle     

% \tableofcontents      

%%%%%%%%%%%%%%%%%%%%%%%%%%%%%%%%%%%     

\begin{abstract} 
Error bounds and complexity bounds 
in numerical analysis and infor\-mation-based complexity 
are often proved for functions that are defined 
on very simple domains, such as a cube, a torus, or a sphere.
We study optimal error bounds for the approximation or integration 
of functions 
defined on 
$D_d \subset \R^d$ 
and only assume that $D_d$ 
is a  bounded Lip\-schitz domain. 
Some results are even more general. 
%  Sometimes we normalize the domain and
%  assume that the volume of $D_d$ is one, $\lambda^d (D_d)=1$. 
We study three different concepts to measure the complexity:  
order of convergence,
asymptotic constant,
and
explicit uniform bounds, i.e., 
bounds that hold for all $n$ (number of pieces of information) 
and all (normalized) domains. 

It is known for many problems that the order of convergence 
of optimal algorithms does not depend 
on the domain $D_d \subset \R^d$. We 
present examples for which the following statements 
are true: 
\begin{enumerate} 
\item 
Also the asymptotic constant does not depend on the shape of $D_d$
or the imposed boundary values,  
it only depends on the volume of the domain. 
\item
There are explicit and uniform lower (or upper, respectively) 
bounds for the error that are 
only slightly smaller (or larger, respectively) 
than the asymptotic error bound. 
%  By explicit we mean that the bounds hold for any $n$, not just asymptotically. 
%  By uniform we mean that they hold for all domains of a given size. 
\end{enumerate} 
\end{abstract}

\section{Introduction}

We study optimal error bounds for the approximation or integration 
of functions $f \colon D_d \to \R$, where $D_d \subset \R^d$ 
is a bounded \change{set}.  %  domain. 
We assume that $f \in F(D_d)$ where $F(D_d)$ is a unit ball 
with respect to some norm.
Algorithms $A_n$ may use $n$ function values of $f$, 
this is called standard information 
and denoted by $\lstd$, or $n$ values of general linear functionals, 
this is called general information and denoted by $\lall$. 
We discuss the worst case error of optimal algorithms and use 
common notation such as 
$
e_n(F(D_d), \APP_\infty, \lstd)
$
and
$
e_n(F(D_d), \INT , \lstd)
$
and 
$
e_n(F(D_d), \APP_2, \lall)  
$.
These problems are linear and we know that 
\begin{equation}  \label{eq1}
e_n(F(D_d), \APP_\infty, \lstd) = \inf_{x_1, \dots , x_n \in D_d} 
\sup_{f \in F(D_d), \, f(x_i)=0} \ \Vert f \Vert_\infty 
\end{equation}  
and 
\begin{equation}   \label{eq2} 
e_n(F(D_d), \INT, \lstd) = \inf_{x_1, \dots , x_n \in D_d} 
\sup_{f \in F(D_d), \, f(x_i)=0} \ \int_{D_d} f(x) \, {\rm d} x . 
\end{equation} 
For these problems it is enough to consider linear algorithms. 
Linear algorithms are also optimal for $L_2$-approximation 
if $F(D_d)$ is a unit ball of a Hilbert space and 
in this case 
\begin{equation}  \label{eq3} 
e_n(F(D_d), \APP_2, \lall ) = \inf_{L_1, \dots , L_n} \sup_{f \in F(D_d), 
\, L_i(f) =0} \ \Vert f \Vert_2 = \sigma_{n+1} 
\end{equation} 
coincides with the approximation numbers (linear widths) or singular 
values of the embedding of $F(D_d)$ 
into $L_2$. 
Here the $L_i$ can be arbitrary linear functionals. 
Readers who are not familiar with optimal recovery or information-based 
complexity may read the formulas \eqref{eq1}-\eqref{eq3}  as definitions; 
for more background see \cite{NW08}, in particular Section 4.2.  

\begin{center}
$*$
\end{center}

Much is known about the order of convergence of the 
numbers $e_n$, in particular for simple 
domains $D_d$, 
such as the cube or the torus. 
General bounded Lipschitz domains are studied in 
\cite{DNS1,He09a,He09b,He18,Mi19,NWW04,NT06,Tr07,Tr08,Tr10,Vy06,Vy07b}.
In many cases, the optimal order of the $e_n$ is of the form 
$$
e_n \asymp n^{-\alpha} \, (\log n)^\beta ,  
$$
where $\alpha$ and $\beta$ do not depend on the domain $D_d$. 
It is interesting to know also the exact asymptotic constant 
$$
C: = \lim_{n\to \infty}  e_n \, n^{\alpha} \, (\log n)^{-\beta} ,
$$
if it exists. 
The value of $C$ is known only in rare cases 
(unless $d=1$, we do not discuss the univariate case 
in detail), 
and usually only for very special domains, like the cube, see 
\cite{CKS16,Kr18,KSU14,KSU15}. 
Quite remarkable are recent results of 
Mieth~\cite{Mi19}, to be discussed later, since they hold for 
general \change{bounded open (non-empty) sets} $D_d
\subset \R^d$. 

The order of convergence and the asymptotic constant are not really relevant 
for the applications, where we only can use a ``small'' number of $n$; 
see \cite{NW09} for a drastic example. 
We need explicit (upper and lower) bounds for finite $n \in \N$. 
It is remarkable that some bounds hold uniformly, i.e., 
for all $D_d$ of a given size or volume. 
Explicit lower bounds can be used to prove the curse of dimension
and explicit upper bounds can be used to prove the tractability of certain 
problems. We refer to \cite{NW08,NW10,NW12}, where mainly simple domains,
usually the cube or the torus, are studied.  

\begin{center} 
$*$
\end{center}

We discuss the approximation and integration of 
Lipschitz and H\"older functions in Section~2. 
Using results of Hlawka, Kolmogorov and Tikhomirov,  Sukharev and Chernaya we see 
that, asymptotically, the error behaves like $c \cdot n^{-1/d}$
and $c$ only depends on the volume $\lambda^d(D_d)$ 
of the domain $D_d$. 
We also prove explicit uniform error bounds that hold for every $n \in \N$ 
and every domain $D_d$ (with a given volume). 
\change{For these results we assume that $D_d \subset \R^d$ is a bounded Jordan 
measurable set with an interior point.} 
In Section 3 we study functions with a higher smoothness
and present, in particular, an open problem concerning $C^2$ functions. 

In Section 4 we use the class $\lall$ and mainly present results of 
Mieth concerning $L_2$-approximation of functions from 
the Sobolev spaces $H^r(D_d)$. 
Again the asymptotic constants do not depend on $D_d$ 
and, \change{using} work of Kr\"oger and Li and Yau, 
one can obtain explicit uniform 
bounds that are very close to the asymptotic bounds. 

Along the way, we present several open problems. 

\section{Approximation of H\"older functions} 

\subsection{$L_\infty$-Approximation} 

Assume that $(D, \rho)$ is a bounded metric space and consider the class 
$$
F^\omega (D)  = \{ f\: D \to \R \mid \omega (f,h) \le \omega (h) \} .
$$
Here 
$$
\omega (f,h) = \sup \{ | f(x) - f(y) | \mid \rho(x,y) \le h \}
$$
is the modulus of continuity of $f$ 
and $\omega\: \R^+ \to \R^+$ is assumed to be nondecreasing,
continuous, subadditive with $\lim_{h \to 0} \omega (h) =0$. 
We also need the covering numbers 
$$
c_n = \inf_{x_1, \dots , x_n \in D} \sup_{x \in D} \min_i \rho (x, x_i) .
$$
We start with a result of Sukharev~\cite{Su78}  that can also be found in 
Novak~\cite{No88}.

\begin{prop}
	$$
	e_n (F^\omega (D) , \APP_\infty, \lstd  ) = \omega (c_n) 
	$$
\end{prop}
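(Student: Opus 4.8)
The plan is to start from the defining formula \eqref{eq1}, which expresses $e_n(F^\omega (D), \APP_\infty, \lstd)$ as $\inf_{x_1,\dots,x_n}\sup_{f\in F^\omega(D),\,f(x_i)=0}\norm{f}_\infty$, and to evaluate the inner supremum exactly for each fixed configuration $x_1,\dots,x_n\in D$. Writing $g(x)=\min_i\rho(x,x_i)=\dist(x,\{x_1,\dots,x_n\})$, I claim the extremal admissible function is $f^*(x)=\omega(g(x))$, so that the inner supremum equals $\sup_{x\in D}\omega(g(x))$. Thus the whole proof reduces to two pieces: identifying this extremal function, and then interchanging $\omega$ with the supremum over $x$ and the infimum over configurations.

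For the upper bound on the inner supremum, I would argue that any $f\in F^\omega (D)$ with $f(x_i)=0$ satisfies $|f(x)|=|f(x)-f(x_i)|\le\omega(\rho(x,x_i))$ for every $i$; since $\omega$ is nondecreasing this gives $|f(x)|\le\omega(g(x))$, hence $\norm{f}_\infty\le\sup_x\omega(g(x))$. For the matching lower bound I would check that $f^*$ itself lies in $F^\omega(D)$ and vanishes at the $x_i$: indeed $f^*(x_i)=\omega(0)=0$ (using $\omega(0)=0$, which follows from $\lim_{h\to0}\omega(h)=0$ together with monotonicity), and to see $\omega(f^*,h)\le\omega(h)$ I would use that $g$ is $1$-Lipschitz together with the subadditivity and monotonicity of $\omega$: if $\rho(x,y)\le h$ and $g(x)\ge g(y)$, then $g(x)\le g(y)+h$, so $0\le f^*(x)-f^*(y)=\omega(g(x))-\omega(g(y))\le\omega(g(y)+h)-\omega(g(y))\le\omega(h)$. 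This shows the inner supremum is exactly $\sup_{x\in D}\omega(g(x))$.

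It remains to interchange $\omega$ with the suprema and infima. Because $\omega$ is nondecreasing and continuous, $\sup_{x}\omega(g(x))=\omega(\sup_x g(x))=\omega\bigl(r(x_1,\dots,x_n)\bigr)$, where $r(x_1,\dots,x_n)=\sup_{x}\dist(x,\{x_i\})$ is the covering radius of the configuration; the nontrivial inequality $\ge$ here uses continuity to approach the supremum from below. The same monotonicity and continuity then let me pass the outer infimum through $\omega$, yielding $e_n=\inf_{x_1,\dots,x_n}\omega(r(x_1,\dots,x_n))=\omega\bigl(\inf_{x_1,\dots,x_n}r(x_1,\dots,x_n)\bigr)=\omega(c_n)$. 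I expect the only delicate points to be the verification that $f^*\in F^\omega(D)$ — this is where all four hypotheses on $\omega$ (nondecreasing, continuous, subadditive, vanishing at $0$) are genuinely used — and the two limit interchanges, which rely essentially on the continuity of $\omega$ rather than on any structure of the metric space beyond its boundedness.
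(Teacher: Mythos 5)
Your argument is correct and is essentially the standard proof of Sukharev's result, which the paper itself does not reprove but only cites (\cite{Su78}, \cite{No88}): the identification of the extremal function $f^*(x)=\omega\bigl(\min_i\rho(x,x_i)\bigr)$, the verification via subadditivity and the $1$-Lipschitz property of the distance function, and the two interchanges using monotonicity and continuity of $\omega$ are exactly the intended ingredients. The only external input is formula \eqref{eq1}, which the paper explicitly licenses the reader to take as given, so nothing is missing.
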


\change{We often use the following notation. If 
$(a_n)_n$ and $(b_n)_n$ are sequences of (positive) 
real numbers then $a_n \approx b_n$ means that 
$\lim_{n \to \infty} a_n/b_n =1$.
By $a_n \asymp b_n $ we mean that there exist positive 
constants $c$ and $C$ such that 
$$
c \le a_n / b_n \le C 
$$
for all $n$. 
In general we do not have results about the speed of convergence 
(when we write 
$a_n \approx b_n$) or about the size of $c$ and $C$ 
(when we write 
$a_n \asymp b_n $). 
These notions are not meant in a ``uniform'' way, statements containing 
these symbols are not really 
useful in a practical sense.}

\begin{example} 
	Consider the metric $\rho (x,y) = \Vert x-y\Vert_\infty$ on $\R^d$ 
	and subsets $D_d \subset \R^d$. 
	For $D_d = [0,1]^d$ one gets 
	$c_n = \frac{1}{2} m^{-1}$ for 
	\change{$m^d \le n < (m+1)^{d+1}$}  
	and hence, for $\omega (h)=h$, 
	$$
	e_n (F^\omega ([0,1]^d), \APP_\infty, \lstd ) \approx \frac{1}{2} n^{-1/d}.
	$$
	For general bounded sets $D_d$ that contain an interior point we have 
	$$
	e_n( F^\omega (D_d), \APP_\infty, \lstd ) \asymp n^{-1/d} . 
	$$
	For the existence of an asymptotic constant one needs stronger assumptions. 
	If $D_d$ is Jordan measurable with $\lambda^d (D_d) >0$, then 
	$$
e_n (F^\omega(D_d), \APP_\infty, \lstd  ) 
\approx \frac{1}{2} \lambda^d(D_d)^{1/d} \cdot n^{-1/d} .
	$$
	Hence the asymptotic constant only depends on the volume of the domain. 
	Moreover, the explicit uniform lower bound 
	\begin{equation}  \label{lower1} 
	\inf_{D_d} 	e_n (F^\omega(D_d), \APP_\infty, \lstd  ) \cdot 
	\lambda^d(D_d)^{-1/d} = \frac{1}{2} \cdot n^{-1/d} 
	\end{equation} 	
	holds and this uniform lower bound fits nicely to the asymptotic result. 
Upper bounds for the covering numbers  
$c_n$ and hence for the optimal  error bounds 
	$e_n(F^\omega (D_d), \APP_\infty, \lstd )$ are known for particular sets 
	$D_d \subset \R^d$. 

To prove the lower bound \eqref{lower1} it is enough to estimate the volume of all 
$x \in D_d$ with $\min_i \rho(x,x_i) \le \eps$. 
This volume is at most $n$ times the volume of a 
	$\rho$-ball with radius $\eps$ and so we obtain the inequality 
	$$ 
e_n (F^\omega(D_d), \APP_\infty, \lstd  ) 
\cdot \lambda^d(D_d)^{-1/d} \ge  \frac{1}{2} \cdot n^{-1/d} . 
$$	
This inequality is sharp, 
as can be seen if we take $D_d$ as the disjoint union of $n$ 
$\rho$-balls with the same radius. 
\end{example} 

From now on we do not any more consider arbitrary bounded  metric spaces: 
We assume that $D_d \subset \R^d$ is bounded and the metric is induced 
by a norm in $\R^d$. We denote by $B$ the unit 
ball and also write $\Vert \cdot \Vert_B$ for the norm. 
To simplify the formulas we consider 
only Lipschitz functions, hence $\omega (h) = h$. 
We denote the respective space by $F^B(D_d)$; 
it contains all functions $f\: D_d \to \R$  with 
$$
|f(x) - f(y)| \le \Vert x - y \Vert_B.
$$

\begin{thm}  \label{th1} 
Assume that $D_d \subset \R^d$ is a bounded \change{Jordan measurable 
set} with an interior point. 
Then 
\begin{equation} \label{asy1}  
	e_n (F^B(D_d), \APP_\infty, \lstd ) \approx \Theta_B^{1/d}  
	\left( \frac{\lambda^d(D_d)}{\lambda^d(B)}\right)^{1/d} 
	\cdot n^{-1/d},
	\end{equation} 
	where  $\Theta_B$ is the covering constant of $\R^d$ with respect to $B$ and 
	$$
	1 \le \Theta_B \le d \log d + d \log \log d + 5d .
	$$
	Moreover, 
	\begin{equation} 	\label{lower2} 
	\inf_{D_d}   e_n (F^B(D_d), \APP_\infty, \lstd ) \cdot \lambda^d(D_d)^{-1/d}   
	=  \lambda^d(B)^{-1/d} \cdot n^{-1/d}
	\end{equation} 	
\end{thm}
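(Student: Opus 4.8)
The plan is to reduce the entire statement to the asymptotics of the covering radius and then feed in the classical theory of economical coverings. Applying the Proposition of Sukharev with the modulus $\omega(h)=h$ gives
$$e_n(F^B(D_d),\APP_\infty,\lstd)=c_n,$$
so it suffices to analyse $c_n$. The defining condition $\min_i\|x-x_i\|_B\le r$ for all $x\in D_d$ says exactly that the translates $x_i+rB$ cover $D_d$; hence, writing
$$N(D_d,r):=\min\Bigl\{k : D_d\subseteq\bigcup_{i=1}^{k}(x_i+rB)\ \text{for some}\ x_1,\dots,x_k\Bigr\},$$
the covering radius is the generalized inverse $c_n=\inf\{r>0 : N(D_d,r)\le n\}$.

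For the asymptotic relation \eqref{asy1} I would invoke the classical covering theorem: for a bounded Jordan measurable set with $\lambda^d(D_d)>0$,
$$\lim_{r\to0^+}N(D_d,r)\,r^d=\Theta_B\,\frac{\lambda^d(D_d)}{\lambda^d(B)}.$$
Inverting this relation at $r=c_n$---routine, since $c_n$ is the inverse function of $N(D_d,\cdot)$---gives $n\,c_n^{\,d}\to\Theta_B\,\lambda^d(D_d)/\lambda^d(B)$, which is exactly \eqref{asy1}. Two matching bounds underlie the covering theorem. The upper bound for $N(D_d,r)$ is constructive: start from a covering of $\R^d$ by translates of $rB$ of density at most $\Theta_B+\eps$, discard the translates that do not meet $D_d$, and estimate the remaining number by $(\Theta_B+\eps)\lambda^d(D_d)/(r^d\lambda^d(B))$ up to a boundary correction, which is negligible because $\lambda^d(\partial D_d)=0$ for a Jordan measurable set. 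The lower bound for $N(D_d,r)$ says that no finite family of radius-$r$ translates can cover $D_d$ with asymptotic density below $\Theta_B$; I would obtain it by periodizing a given economical covering of $D_d$ into a covering of all of $\R^d$ and invoking the minimality that defines $\Theta_B$. This transfer from the whole space to a bounded region without losing a constant is the genuinely hard step. Finally, the bounds on $\Theta_B$ are standard: $\Theta_B\ge1$ is the trivial volume bound, and $\Theta_B\le d\log d+d\log\log d+5d$ is Rogers' theorem on economical coverings of $\R^d$ by a convex body.

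The explicit uniform identity \eqref{lower2} I would prove by two elementary estimates, in the spirit of the Example. For the direction ``$\ge$'': if the translates $x_i+c_nB$ cover $D_d$, then
$$\lambda^d(D_d)\le\sum_{i=1}^{n}\lambda^d(x_i+c_nB)=n\,c_n^{\,d}\,\lambda^d(B),$$
so $c_n\,\lambda^d(D_d)^{-1/d}\ge\lambda^d(B)^{-1/d}\,n^{-1/d}$ for every admissible $D_d$, hence also after passing to the infimum. For the reverse inequality I exhibit an extremal domain: let $D_d=\bigcup_{i=1}^{n}(y_i+rB)$ be a pairwise disjoint union of $n$ translates of $rB$, which is Jordan measurable with interior points and satisfies $\lambda^d(D_d)=n\,r^d\,\lambda^d(B)$. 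Placing one point at each centre $y_i$ covers $D_d$ with radius $r$, so $c_n\le r$, while the volume bound just established gives $c_n\ge r$; thus $c_n=r$ and $c_n\,\lambda^d(D_d)^{-1/d}=\lambda^d(B)^{-1/d}\,n^{-1/d}$. Therefore the infimum in \eqref{lower2} is attained and equals $\lambda^d(B)^{-1/d}\,n^{-1/d}$.
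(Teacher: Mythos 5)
Your proposal is correct and follows essentially the same route as the paper: Sukharev's proposition reduces everything to the covering radius $c_n$, the asymptotics of $c_n$ come from the classical covering theorem of Hlawka and Kolmogorov--Tikhomirov together with Rogers' bound on $\Theta_B$, and the uniform identity \eqref{lower2} is obtained from the volume comparison plus the extremal example of $n$ disjoint translates of a ball. The only difference is that you sketch the proof of the covering theorem (where the periodization step is really carried out for cubes approximating $D_d$ from inside, not for $D_d$ itself), whereas the paper simply cites it.
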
 

\begin{proof} 
This is mainly a summary of known results: 
	Hlawka~\cite{Hl49} and 
	Kolmogorov and Tikhomirov~\cite{KT59} proved (independently) sharp results 
	about the covering numbers $c_n$ that yield, together with 
	Proposition 1 of Sukharev~\cite{Su78}, 
	the asymptotic formula~\eqref{asy1}. 
	The bound on $\Theta_B$ is from Rogers~\cite{Ro57}. 	
Only the explicit uniform  lower bound \eqref{lower2} cannot be found in these papers. 
The inequality follows again from a simple volume estimate and the sharpness of the bound follows again 
by the example from above: take $D_d$ as the disjoint union of $n$ balls $\delta B$ with the same radius $\delta$. 
\end{proof} 

\begin{rem} 
Uniform upper bounds do not make sense for this problem since 
	$$
\sup_{D_d} 
	e_n (F^B(D_d), \APP_\infty, \lstd  ) \cdot \lambda^d(D_d)^{-1/d} = \infty  
	$$
	\change{for any fixed $ n \in \N$.} 
	Let us consider the sub-class 
$$
	F^B_0 (D_d) = \{ f \in F^\omega (D_d) \mid f=0 \hbox{ on } \partial D_d \}
$$
of functions that vanish on the boundary of $D_d$. 
	Then similar results hold as in Theorem~\ref{th1},
in particular 	
	$$
	e_n (F^B_0 (D_d), \APP_\infty, \lstd ) \approx \Theta_B^{1/d}  
	\left( \frac{\lambda^d(D_d)}{\lambda^d(B)}\right)^{1/d} 
	\cdot n^{-1/d}.
	$$
	\change{ 
	For the lower bound we apply  \eqref{eq1} and 
	\eqref{asy1}  to the sets $D_d^\eps = \{ x \in D_d 
	\mid d(x, \partial D_d) > \eps \}$. 
	Here it is important that
	$\lim_{\eps \to 0} \lambda^d(D_d^\eps) = \lambda^d(D_d)$, 
	since $D_d$ is Jordan measurable. } 
	Now, instead of~\eqref{lower2}, we obtain a uniform upper bound. 
	It is easy to prove 	
	\begin{equation}  \label{upper1} 
\sup_{D_d}   e_n (F^B_0 (D_d), \APP_\infty, \lstd ) \cdot \lambda^d(D_d)^{-1/d}   
\le 2 \cdot   \lambda^d(B)^{-1/d} \cdot n^{-1/d}  . 
\end{equation} 

\begin{proof}
Assume that the disjoint balls $B_\e (x_i) \subset D_d$ 
with $i=1, \dots , n$ form a complete packing of $D_d$, i.e., 
there is no room for another ball with radius $\e$. 
Then there cannot exist an $x \in D_d$ with a distance of more than 
$2 \e$ from $ \partial D_d \cup \{ x_1, \dots , x_n \}$ and hence 
the radius of information, using the function values at $x_1, \dots , x_n$, 
is at most $2 \e$. 
Then the statement again follows from a simple volume estimate since 
$n \cdot \lambda^d (B_\e) \le \lambda^d (D_d)$. 
 The upper bound \eqref{upper1} is almost optimal since   
	$$
	\lambda^d(B)^{-1/d} \cdot (n+1)^{-1/d} \le 
\sup_{D_d}   e_n (F^B_0 (D_d), \APP_\infty, \lstd ) \cdot \lambda^d(D_d)^{-1/d} . 
	$$
	For this inequality it is enough to consider 
	the case of $n+1$ disjoint balls with the same radius.   
	\end{proof} 
\end{rem} 

\begin{rem} 
Formula~\eqref{asy1} shows that the asymptotic constant 
does not depend on $D_d$, it only depends 
on the volume of the domain. 
Moreover, the asymptotic constant is only by a 
factor $\Theta_B^{1/d}$ larger than the uniform lower bound~\eqref{lower2}. 
This factor is very close to 1, in particular 
if $d$ is large, $\lim_{d\to \infty}  \Theta_B^{1/d} \to 1$. 

	After a suitable normalization, when we put 
	$
	\lambda^d(D_d) = \lambda^d(B) 
$, we even obtain 
$$
	e_n (F^B(D_d), \APP_\infty, \lstd ) \approx \Theta_B^{1/d}  
	\cdot n^{-1/d},
$$
and
$$ 
	\inf_{D_d}   e_n (F^B(D_d), \APP_\infty, \lstd ) =   n^{-1/d} . 
$$
This means that the asymptotic constant very mildly depends on $B$ and 
the explicit uniform lower bound does not depend on $B$ at all. 

For known results on the covering constants $\Theta_B$ see the 
	recent survey~\cite{FT18}.   
\end{rem} 

\begin{rem} 
A special metric is given by the standard norm in $\ell_p^d$, i.e., $B=B_p^d$ 
	is the unit ball in $\ell_p^d$. 
	Then we write 
$\Vert \cdot \Vert_p$ instead of  $\Vert \cdot \Vert_B$ 
	and $F^p(D_d)$ instead of $F^B(D_d)$. It is the space of all functions $f\: D_d \to \R$  with 
$$
|f(x) - f(y)| \le \Vert x - y \Vert_p.
$$
	Of course we can apply Theorem~\ref{th1} in this case and we may use the formula 
$$
	\lambda^d (B_p^d)^{1/d} \approx 2 \Gamma (1+ 1/p) (p e)^{1/p} \cdot d^{-1/p} = c_p \cdot d^{-1/p}  .
$$ 
	Assume again that $D_d \subset \R^d$ is a bounded Jordan \change{measurable set} with an interior point. 
	Then the asymptotic constant is given by 
	\begin{equation} \label{asy2}  
e_n (F^p(D_d), \APP_\infty, \lstd ) \approx \Theta_{B_p^d}^{1/d}  \left( \frac{\lambda^d(D_d)}{\lambda^d(B_p^d)}\right)^{1/d} 
	\cdot n^{-1/d},
	\end{equation} 
	and we obtain from \eqref{lower2}  a uniform lower bound of the form 
	\begin{equation}  \label{lower3} 
	e_n (F^p(D_d), \APP_\infty, \lstd ) \ge  c_p' \,  d^{1/p} \,  \lambda^d(D_d)^{1/d} \cdot  n^{-1/d} . 
	\end{equation} 
This bound heavily depends on the parameter $p$ since
$\lambda^d (B^d_p)^{1/d}$ depends on $p$. 
\end{rem} 

\begin{rem} 
	For large $d$ and $p=2$ and $\lambda (D_d)=1$, 
	formula \eqref{asy1} takes the form 
	$$
	e_n(F^2 (D_d) ), \APP_\infty, \lstd  ) \approx 
	(2 \pi e)^{-1/2} \, d^{1/2} \, n^{-1/d} 
	\approx 0.24197 \, d^{1/2} \, n^{-1/d} .
	$$
	If we take, instead of optimal sample points 
	$x_1, x_2, \dots , x_n$, a regular grid, then the error 
	is $\frac{1}{2} d^{1/2} \, n^{-1/d}$. 
	Hence we only loose a factor of roughly $1/2$ by taking the simplest possible function values 
	instead of the optimal sample points. 

	There is, however, also a different interpretation of the same result: 
	To obtain a given error $\eps$, one needs more than $2^d$ times more function evaluations 
	if one uses a grid instead of optimal function evaluations. 
\end{rem} 

\begin{rem}
	One may use \eqref{lower2} or  \eqref{lower3} to prove the curse of dimension as follows: 
	Assume that the volume $\lambda^d(D_d)$ is one and we consider functions on $D_d$ with 
$$
	|f(x) - f(y)| \le  d^{-1/p}  \Vert x - y \Vert_p.
$$
	Then we need 
	\change{ $n(\e , d) \ge (\tilde c_p / \e)^d$ } 
	function values 
	to reach the error $\eps$, i.e., the problem suffers from the curse of dimension. 
\end{rem} 

\subsection{Integration and $L_1$-approximation} 

Now we study the problem of $L_1$-approximation or numerical integration
and again we assume that $D_d \subset \R^d$ is Jordan measurable with 
$0 < \lambda^d (D_d) < \infty$. 
Asymptotic formulas, even for more general (weighted)  integration problems, 
where proved by Chernaya~\cite{Ch95} and by Gruber~\cite{Gr04}. 
We add an upper bound for the asymptotic constant $\xi_B$  and see that 
it is very close to the lower bound, in particular for large $d$. 
Also the explicit uniform lower bound is new. 

\begin{thm}  \label{th2} 
Assume that $D_d \subset \R^d$ is a bounded \change{Jordan measurable 
set} with an interior point. 
	Then 
	\begin{equation} \label{asy3}  
		e_n (F^B(D_d), \INT, \lstd ) \approx \xi_B  \,  \lambda^d(D_d)   
		\left( \frac{\lambda^d(D_d)}{\lambda^d(B)}\right)^{1/d} 
	\cdot n^{-1/d},
	\end{equation} 
	where  $\xi_B$  is a constant that depends on the norm and 
	$$
	\frac{d}{d+1}  \le \xi_B \le  \frac{d}{d+1}  \Theta_B^{1/d}  \le  \frac{d}{d+1} ( d \log d + d \log \log d + 5d )^{1/d}  .
	$$
	Moreover, 
	\begin{equation} 	\label{lower4} 
		\inf_{D_d}   e_n (F^B(D_d), \INT, \lstd  ) \cdot \lambda^d(D_d)^{-(d+1)/d}   =  \frac{d}{d+1}  \lambda^d(B)^{-1/d} 
		\cdot n^{-1/d} . 
	\end{equation} 	
\end{thm}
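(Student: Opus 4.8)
The plan is to reduce $e_n(F^B(D_d),\INT,\lstd)$ to a geometric location problem, then to treat the three assertions separately, invoking Chernaya and Gruber only for the asymptotic identity itself. Starting from \eqref{eq2}, I would observe that for fixed nodes $x_1,\dots,x_n$ the pointwise largest admissible competitor is the distance function $f^*(x)=\min_i\Vert x-x_i\Vert_B=\dist_B(x,\{x_1,\dots,x_n\})$: it is $1$-Lipschitz with respect to $\Vert\cdot\Vert_B$, vanishes at every $x_i$, and every other admissible $f$ satisfies $f(x)\le f(x_i)+\Vert x-x_i\Vert_B=\Vert x-x_i\Vert_B$ for each $i$. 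Since $\int_{D_d}f$ is maximised by the pointwise largest admissible $f$, this gives
\[
e_n(F^B(D_d),\INT,\lstd)=\inf_{x_1,\dots,x_n\in D_d}\int_{D_d}\dist_B(x,\{x_1,\dots,x_n\})\rd x,
\]
the continuous median (quantization) functional whose asymptotics are exactly \eqref{asy3} by Chernaya~\cite{Ch95} and Gruber~\cite{Gr04}.

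For the uniform lower bound \eqref{lower4} I would combine the layer-cake formula with a volume count, in the spirit of Theorem~\ref{th1}. Writing $V=\lambda^d(D_d)$ and $v=\lambda^d(B)$, for any nodes one has $\{x\in D_d:\dist_B(x,\{x_i\})>t\}=D_d\setminus\bigcup_i(x_i+tB)$, whose measure is at least $V-n\,t^d v$. Integrating this lower bound over $t\in[0,t_0]$ with $t_0=(V/(nv))^{1/d}$ yields $\int_{D_d}\dist_B\ge\frac{d}{d+1}V^{(d+1)/d}v^{-1/d}n^{-1/d}$, which is the $\ge$ direction of \eqref{lower4}. Sharpness follows from the example used for Theorem~\ref{th1}: if $D_d$ is a disjoint union of $n$ translates of $\delta B$ that are pairwise far apart, placing one node at each centre makes each ball's own centre its nearest node, and the computation $\int_{\delta B}\Vert x\Vert_B\rd x=\frac{d}{d+1}v\delta^{d+1}$ shows the bound is attained; taking the infimum over admissible $D_d$ turns \eqref{lower4} into an equality. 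Comparing the global inequality $e_n\ge\frac{d}{d+1}V^{(d+1)/d}v^{-1/d}n^{-1/d}$, valid for every $n$, with \eqref{asy3} and letting $n\to\infty$ gives $\xi_B\ge\frac{d}{d+1}$.

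The main work, and the step I expect to be the real obstacle, is the upper bound $\xi_B\le\frac{d}{d+1}\Theta_B^{1/d}$, where the point is to gain the factor $\frac{d}{d+1}$ beyond the crude covering estimate $\xi_B\le\Theta_B^{1/d}$. I would fix an optimal covering of $\R^d$ by translates of $cB$, with centre set $G$ of density $\rho$ satisfying $\rho\,c^d\,v=\Theta_B$ (the Rogers constant bounded in \cite{Ro57}), and choose $c$ so that $\rho V=n$; the nodes are placed at $G\cap D_d$. The Voronoi cells $\Pi(g)$ of $G$ tile $\R^d$, each is convex, contains its centre, and is contained in $g+cB$ since the covering radius is $c$. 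The decisive observation is that for $t\in[0,c]$ the scaled cell $(t/c)\,\Pi$, with $\Pi=\Pi(0)$, lies in $\Pi\cap tB$ (by convexity, as $0\in\Pi$ and $\Pi\subseteq cB$), so $\lambda^d(\Pi\cap tB)\ge(t/c)^d\lambda^d(\Pi)$; feeding this into the layer-cake formula gives
\[
\int_{\Pi}\Vert x\Vert_B\rd x\le\lambda^d(\Pi)\int_0^c\bigl(1-(t/c)^d\bigr)\rd t=\tfrac{d}{d+1}\,c\,\lambda^d(\Pi).
\]
Summing over the $\approx n$ cells meeting $D_d$ and using $\lambda^d(\Pi)=V/n$ and $c=\Theta_B^{1/d}(V/(nv))^{1/d}$ produces $\int_{D_d}\dist_B\le(\frac{d}{d+1}\Theta_B^{1/d}+o(1))V^{(d+1)/d}v^{-1/d}n^{-1/d}$, and comparison with \eqref{asy3} yields $\xi_B\le\frac{d}{d+1}\Theta_B^{1/d}$; the final inequality in the displayed chain for $\xi_B$ is then just Rogers' bound on $\Theta_B$. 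The one genuinely delicate point is the bookkeeping at $\partial D_d$: one must show that the number of cells meeting $D_d$ is $n(1+o(1))$ and that truncated boundary cells contribute negligibly, which is exactly where Jordan measurability of $D_d$ (so that the measure of an $\eps$-neighbourhood of $\partial D_d$ tends to $0$) enters, just as in the boundary-vanishing variant discussed after Theorem~\ref{th1}.
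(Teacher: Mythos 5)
Your proposal is correct in substance, but it takes a genuinely different route from the paper for the key inequality $\xi_B\le\frac{d}{d+1}\Theta_B^{1/d}$. The paper's proof is largely a citation argument: the asymptotics \eqref{asy3} and the lower bound $\xi_B\ge\frac{d}{d+1}$ are taken from Chernaya and Gruber, and the upper bound on $\xi_B$ is obtained by comparing $\INT$ with $\APP_\infty$ through the general radius inequality $r(N_n,\INT)\le\frac{d}{d+1}\,r(N_n,\APP_\infty)\,\lambda^d(D_d)$ applied to the nodes that are optimal for $\APP_\infty$ (whose asymptotics are known from Theorem~\ref{th1}); you instead build an explicit covering-based node set and estimate the quantization integral cell by cell, and you also rederive $\xi_B\ge\frac{d}{d+1}$ from your uniform lower bound rather than citing it. What your version buys is self-containedness and an explicit asymptotically good algorithm; what the paper's version buys is brevity, since the $\frac{d}{d+1}$ gain is encapsulated once in the radius comparison (which, internally, rests on the same ``shrink the covering by a factor $t/c$'' volume estimate you use). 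Your treatment of \eqref{lower4} (layer-cake plus the count $\lambda^d(\{\dist_B>t\})\ge V-n t^d v$, sharpness via $n$ disjoint balls) coincides with the paper's. One repairable slip: for a non-Euclidean norm $\Vert\cdot\Vert_B$ the nearest-point cells $\Pi(g)$ need not be convex, so your parenthetical justification of $\lambda^d(\Pi\cap tB)\ge(t/c)^d\lambda^d(\Pi)$ is not literally valid; however, these cells are always star-shaped about their centres (if $\Vert y-g\Vert_B\le\Vert y-g'\Vert_B$ for all $g'$, then the same holds along the segment from $g$ to $y$ by the triangle inequality), and star-shapedness together with $\Pi\subseteq g+cB$ is all your scaling argument needs. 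You should also replace ``$\lambda^d(\Pi)=V/n$'' by a statement about the total volume of the cells meeting $D_d$ being $V(1+o(1))$, since optimal coverings need not have congruent cells; this, and the relocation of centres lying outside $D_d$, is exactly the boundary bookkeeping you already flagged, and Jordan measurability handles it as you describe.
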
 

\begin{proof} 
	The asymptotic formula is from Chernaya~\cite{Ch95}, see also 
	Gruber~\cite{Gr04}. 
	Also the lower bound on $\xi_B$ is contained in \cite{Ch95}. 
	To prove the upper bound on $\xi_B$ we compare with \eqref{asy1} and take the case 
	$\lambda^d(D_d) =1$.
	We obtain $\xi_B \le \Theta_B^{1/d}$, but even more is true: 
	Assume that an information mapping is given and fixed, 
	$N_n(f) = (f(x_1), \dots , f(x_n))$. Then we can define the radius of information 
	for the two problems $\APP_\infty$ and $\INT$ by
	$
r(N_n, \APP_\infty) = \sup_{\Vert f \Vert \le 1, \, N_n(f)=0} \Vert f \Vert_\infty
$
and
		$
r(N_n, \INT ) = \sup_{\Vert f \Vert \le 1, \, N_n(f)=0} \int_{D_d} f(x) \, {\rm d} x 
$
 and obtain 
	$$
	r(N_n, \INT) \le \frac{d}{d+1} \,  r (N_n, \APP_\infty) 
	$$
	and the statement follows. 

Similarly, we obtain the  bound
	$$ 
		\inf_{D_d}   e_n (F^B(D_d), \INT, \lstd  ) 
		\cdot \lambda^d(D_d)^{-(d+1)/d}   \le   \frac{d}{d+1}  \lambda^d(B)^{-1/d} 
		\cdot n^{-1/d} . 
$$	
To prove equality it is again enough to consider $D_d$ as the disjoint union 
	of $n$ balls with equal radius.
\end{proof} 

\begin{rem}
	The results for $L_1$-approximation (or integration) 
	and $L_\infty$-approximation are very similar. For normed problems with $\lambda^d(D_d) =1$ 
	the optimal error bounds differ, \change{asymptotically for large $n$,  
	at most by a factor 
	$$
K_d \in [ 1, \frac{d+1}{d} ( d \log d + d \log \log d + 5d)^{1/d}],
	$$
	which is close to one. }
	Therefore we do not study $L_p$-approximation for $1 < p < \infty$ in detail. 
\end{rem} 

	Again one may use \eqref{lower4} to prove the curse of dimension. 
	We formulate the result as a corollary. 
	\change{ The case $p=2$ is already contained in~\cite{HNUW14b}, 
	see Proposition~3.2. 
If one puts $a= 1/(2 \e) $ then one obtains 
$n(\e , d) \ge \frac{1}{2} ( \sqrt{72 e \pi } \e )^{-d} $,  
and similar results are contained in
	Sukharev~\cite{Su79}.}  
	
\begin{cor}  \label{cor1} 
	Assume that $D_d \subset \R^d$ is Jordan measurable 
	with  $\lambda^d(D_d) = 1  $ and consider functions on $D_d$ with 
$$
	|f(x) - f(y)| \le  d^{-1/p}  \Vert x - y \Vert_p.
$$  
	Then we need, for the integration problem 
	$$
	S_d(f) = \int_{D_d} f(x) \, {\rm d} x ,
	$$
at least \change{ $n(\e , d) \ge (\tilde c_p / \e)^d$ } 
function values 
	to reach the error $\eps$, i.e., the problem suffers from the curse of dimension.
\end{cor}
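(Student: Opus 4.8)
The plan is to reduce the statement to the uniform lower bound \eqref{lower4} of Theorem~\ref{th2} by first identifying the correct unit ball. The Lipschitz condition $|f(x)-f(y)|\le d^{-1/p}\Vert x-y\Vert_p$ says precisely that $f$ lies in $F^B(D_d)$ for the norm $\Vert z\Vert_B = d^{-1/p}\Vert z\Vert_p$, whose unit ball is $B = d^{1/p}B_p^d$. Since scaling $B_p^d$ by the factor $d^{1/p}$ multiplies the volume by $d^{d/p}$, I would first record, using the formula $\lambda^d(B_p^d)^{1/d}\approx c_p\,d^{-1/p}$ with $c_p = 2\Gamma(1+1/p)(pe)^{1/p}$, that
\[
\lambda^d(B)^{1/d} = d^{1/p}\,\lambda^d(B_p^d)^{1/d} \approx d^{1/p}\cdot c_p\,d^{-1/p} = c_p .
\]
The key point is that the normalizing factor $d^{-1/p}$ is chosen exactly so that $\lambda^d(B)^{-1/d}$ tends to the positive constant $c_p^{-1}$; since each term is a finite positive number and the tail is bounded away from zero, the infimum $\gamma_p := \inf_d \lambda^d(B)^{-1/d}$ is strictly positive and depends only on $p$.

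Next I would apply \eqref{lower4} to this $B$ and to the given set $D_d$, which is Jordan measurable with $\lambda^d(D_d)=1$. Because \eqref{lower4} is an equality for the infimum over all admissible domains, every individual such $D_d$ satisfies
\[
e_n(F^B(D_d),\INT,\lstd)\ge \frac{d}{d+1}\,\lambda^d(B)^{-1/d}\,n^{-1/d}\ge \frac{\gamma_p}{2}\,n^{-1/d},
\]
where I used $\frac{d}{d+1}\ge\frac12$ together with the uniform bound $\lambda^d(B)^{-1/d}\ge\gamma_p$. Inverting this estimate finishes the argument: if an algorithm using $n$ function values achieves error at most $\e$, then $\frac{\gamma_p}{2}n^{-1/d}\le e_n\le\e$, hence $n\ge(\gamma_p/(2\e))^d$, so setting $\tilde c_p=\gamma_p/2$ yields $n(\e,d)\ge(\tilde c_p/\e)^d$, which is the asserted curse of dimension. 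For $p=2$ one has $c_2=\sqrt{2\pi e}$, giving an explicit value of $\tilde c_2$; the sharper constant quoted from \cite{HNUW14b} arises from a more careful version of the same computation.

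The work here is essentially bookkeeping rather than a genuine difficulty, and I do not expect a real obstacle. The only thing to watch is the rescaling of the unit ball and the resulting volume factor $d^{d/p}$. The conceptual content is that this factor exactly cancels the $d^{-1/p}$ decay of $\lambda^d(B_p^d)^{1/d}$, so the base of the exponential lower bound on $n$ is a constant independent of the dimension; without the normalization $d^{-1/p}$ in the Lipschitz condition the bound would degrade with $d$ and would fail to exhibit the curse.
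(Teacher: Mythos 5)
Your proposal is correct and follows exactly the route the paper intends: the paper offers no detailed proof, only the remark that the corollary follows from the uniform lower bound \eqref{lower4}, and your argument (identifying the unit ball $B=d^{1/p}B_p^d$, noting that the rescaling cancels the $d^{-1/p}$ decay of $\lambda^d(B_p^d)^{1/d}$ so that $\lambda^d(B)^{-1/d}$ is bounded below by a positive constant depending only on $p$, and then inverting the resulting bound $e_n\ge \tfrac{d}{d+1}\lambda^d(B)^{-1/d}n^{-1/d}$) is precisely that reduction carried out in full.
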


There are two directions to continue these studies:
\begin{itemize}
\item
We may  study other function spaces. 
\item
We may allow general linear information instead of 
function evaluation.
\end{itemize}
We will follow both directions in the following. 

\section{Other function spaces} 

We start with a result from \cite{HNUW17}.
For any open set $D_d \subset \R^d$ with volume $\lambda^d(D_d)=1$ 
we consider the set 
$$
C^r(D_d) = \{ f\: D_d \to \R \mid \Vert D^\beta f \Vert_\infty \le 1, \ 
|\beta |_1 \le r \}. 
$$

\begin{thm}  \label{XX} 
For all $r \in \N$ there exists a constant $c_r>0$ such that for all 
$d,n \in \N$ 
$$
e_n (C^r(D_d), \INT, \lstd ) \ge \min \{ 1/2, \,  c_r \, d \, n^{-r/d} \} . 
$$ 
\end{thm}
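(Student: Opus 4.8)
The plan is to use the lower-bound representation \eqref{eq2}: since $\INT$ is linear, it suffices to fix an arbitrary point set $x_1,\dots,x_n\in D_d$ and to exhibit one \emph{fooling function} $f\in C^r(D_d)$ with $f(x_i)=0$ for all $i$ and $\int_{D_d}f\,\rd x$ at least the claimed bound; taking the supremum over such $f$ and then the infimum over point sets yields the theorem. Two features guide the construction. First, the $\min$ with $1/2$ is a saturation effect: the order-zero constraint $\Vert f\Vert_\infty\le1$ already forces $\int_{D_d}f\le1$, so in the range where $c_r\,d\,n^{-r/d}\ge1/2$ one only needs a function that is $\approx1$ on most of $D_d$ and vanishes at the $x_i$. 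Second, and decisively, the factor $d$ must come from $\ell_1$-geometry: the first-order part of the constraint, $|\partial_i f|\le1$ for every $i$, is exactly the condition $|f(x)-f(y)|\le\Vert x-y\Vert_1$, and $\lambda^d(B_1^d)^{-1/d}=(d!)^{1/d}/2\asymp d/(2e)$ is the mechanism that produced the factor $d$ for $r=1$ (where $C^1(D_d)$ coincides, up to the sup-norm cap, with the class $F^{B_1^d}(D_d)$ of Theorem~\ref{th2} with $B=B_1^d$).

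For the main term I would build the fooling function from the $\ell_1$-distance to the sample set. Write $\delta(x)=\dist_1(x,\{x_1,\dots,x_n\})$ and set $f=\Psi\circ\delta$, where $\Psi\colon[0,\infty)\to[0,1]$ is a fixed $C^r$ profile with $\Psi(0)=0$ that grows like $t^r/r!$ for small $t$ and then saturates. Choosing the cross-over scale $R\asymp(n\,\lambda^d(B_1^d))^{-1/d}\asymp d\,n^{-1/d}$ — the radius at which $n$ $\ell_1$-balls of radius $R$ have total volume equal to $\lambda^d(D_d)=1$ — makes $\Psi(t)\asymp\min(t,R)^r$ effectively. The integral is then estimated by the layer-cake formula together with the elementary volume bound
\[
\lambda^d\bigl(\{x\in D_d:\delta(x)\le t\}\bigr)\le n\,t^{d}\,\lambda^d(B_1^d),
\]
which holds for \emph{every} $D_d$ with $\lambda^d(D_d)=1$, because the sublevel set is covered by $n$ $\ell_1$-balls of radius $t$. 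A short computation then gives $\int_{D_d}f\,\rd x\gtrsim\tfrac{d}{d+r}R^r\asymp d^{\,r}\,n^{-r/d}$ in the regime $R\lesssim1$, which is $\ge c_r\,d\,n^{-r/d}$; in the complementary regime the linear part of $\Psi$ already yields $\int_{D_d}f\gtrsim R\gtrsim d\,n^{-r/d}$, and when $R$ is large one caps $f$ at $1$ to land in the $1/2$ case. Matching $\Psi$ to the regime and verifying that the rescaling keeps $\Vert D^\beta f\Vert_\infty\le1$ for all $|\beta|\le r$ is the routine part.

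The hard part is that $\delta(x)=\dist_1(x,S)$ is only Lipschitz: the $\ell_1$-norm is non-differentiable on the coordinate hyperplanes, and the distance function has further singularities along its medial axis and at the points of $S$ themselves, so $f=\Psi\circ\delta$ is not literally $C^r$ and its high-order derivatives are a priori unbounded. The crux of a rigorous argument is therefore to replace $\delta$ by a genuine $C^r$ surrogate — for instance by mollifying at a scale that is small relative to $R$ yet still controls $\Vert D^\beta\cdot\Vert_\infty$ for $2\le|\beta|\le r$, or by assembling $f$ from explicit smooth $\ell_1$-type bumps — in such a way that (i) all the bounds $\Vert D^\beta f\Vert_\infty\le1$ hold simultaneously, (ii) one still has $f(x_i)=0$ exactly, and (iii) the $\ell_1$ volume factor, and hence the prefactor $d$, survives the regularization. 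It is precisely this interaction between $C^r$-smoothing and the pointy $\ell_1$-geometry that degrades the achievable exponent from the Lipschitz $n^{-1/d}$ to $n^{-r/d}$ while only just preserving the linear-in-$d$ prefactor; making the resulting constant $c_r$ depend on $r$ alone, uniformly in $d$ and $n$, is where the real work lies.
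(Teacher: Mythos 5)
First, note that the paper itself contains no proof of Theorem~\ref{XX}: the statement is quoted from \cite{HNUW17}, so your argument has to stand on its own. It does not, and the problem is visible already at the quantitative level: your construction, if it could be completed, would prove too much. In the regime $R=d\,n^{-1/d}\lesssim 1$ you claim a fooling function $f\in C^r(D_d)$ with $f(x_i)=0$ and $\int_{D_d}f\,\rd x\gtrsim R^r\asymp d^{\,r}n^{-r/d}$. Apply this to $D_d=[0,1]^d$ with $x_1,\dots,x_n$ a regular grid, $n=m^d$: the tensor product of one-dimensional composite quadrature rules integrates every $f\in C^r([0,1]^d)$ with error at most $c_r'\,d\,m^{-r}=c_r'\,d\,n^{-r/d}$ (telescope coordinate by coordinate; each one-dimensional error is $\le c_r'm^{-r}$ because $\Vert\partial_j^kf\Vert_\infty\le1$). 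This is exactly the upper bound the paper invokes when it says the lower bound ``cannot be improved''. Since $d^{\,r}n^{-r/d}\gg d\,n^{-r/d}$ for $r\ge2$ and $d$ large, no admissible fooling function with your claimed integral exists, so the failure must sit in the step you defer as ``the routine part'' / ``where the real work lies'', namely the $C^r$-regularization.

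One can also see directly why that step cannot be carried out. Where $\delta(x)=\dist_1(x,\{x_1,\dots,x_n\})$ is smooth and $\delta\le1$, the composite $f=\Psi\circ\delta$ with $\Psi(t)=t^r/r!$ does satisfy $\Vert D^\beta f\Vert_\infty\le1$ (locally $\delta=\sum_j|x_j-(x_i)_j|$, so $D^\beta f=\delta^{\,r-|\beta|}/(r-|\beta|)!$), which is why the heuristic integral comes out as $R^r$. But across each hyperplane $\{x_j=(x_i)_j\}$ inside the $\ell_1$-Voronoi cell of $x_i$, the derivative $\partial_jf=\Psi'(\delta)\,\partial_j\delta$ jumps by $2\Psi'(\delta)\asymp\delta^{\,r-1}$, and this happens at points where $\delta$ is of order $R$, not of order $0$. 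Any $C^r$ surrogate agreeing with $f$ outside a window of width $w$ in the $x_j$-direction must have $|\partial_j^{\,r}f|\gtrsim\delta^{\,r-1}/w^{\,r-1}$ somewhere in that window, so keeping all pure $r$-th derivatives $\le1$ forces $w\gtrsim\delta\asymp R$: the regularization scale is the scale of the bump itself, and the bump is flattened. (For $r=1$ there is no such constraint, which is why the $\ell_1$-volume mechanism $\lambda^d(B_1^d)^{-1/d}\asymp d$ genuinely produces the factor $d$ there; for $r\ge2$ it does not survive smoothing.) The proof in \cite{HNUW17} is structured differently --- it is an essentially coordinate-wise, one-dimensional fooling argument, consistent with the matching product-rule upper bound, and that is where the single factor $d$ (rather than $d^{\,r}$) comes from. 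So the gap is not a missing technical lemma: the distance-function construction has to be abandoned for $r\ge2$.
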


Observe that the constant $c_r$ does not depend on $D_d$ or $d$, we have an explicit 
uniform lower bound; the same lower bound holds for the infimum 
over all $D_d$ (with volume 1). 

\begin{rem} 
1) The lower bound cannot be improved since for cubes we have a similar upper bound. 
It would be good to know more on the constants $c_r$ and on extremal sets $D_d$,
where $e_n(C^r(D_d), \INT, \lstd )$ is small,  for given $r$, $d$ and $n$. 

For these function spaces there cannot be a meaningful explicit uniform  upper bound 
since
$\sup_{D_d}  e_n (C^r(D_d), \INT, \lstd ) =1$. 
The supremum over $D_d$ makes sense if we impose boundary conditions 
such as $f(x) = 0$ for $x \in \partial D$.

2) 
It is known that 
the weak order
$$
e_n (C^r(D_d), \INT, \lstd  ) \asymp n^{-r/d} 
$$
holds at least for every bounded Lipschitz domain. 
This follows from much more general results of \cite{NWW04,NT06}.  
We guess that more is true and the asymptotic constant 
$$
\lim_{n \to \infty} e_n(C^r(D_d), \INT, \lstd ) \cdot n^{r/d} = C_{D_d} 
$$
does not depend on $D_d$ (for fixed $d$) if $D_d$ is Jordan measurable
with $\lambda^d(D_d)=1$.  
\end{rem}

\begin{rem}
The problem $\APP_\infty$ for the same spaces and norms was studied 
by Krieg~\cite{Kr19}. The lower bound 
$d\, n^{-r/d}$ is now replaced by $d^{r/2} \, n^{-r/d}$ 
if $r$ is even and again this cannot be  improved since the bound 
is sharp for the cube. 
If $r$ is odd and $r \ge 3$ then the exact order is unknown,
Krieg proved for the cube $D_d=[0,1]^d$  the lower bound 
$
 d^{r/2} \, n^{-r/d}
$
and the upper bound  
$ d^{(r+1)/2} \,  n^{-r/d}$ . 
It follows that  
approximation is essentially more difficult then integration 
iff $r \ge 3$.
The lower bound holds for $\eps < \eps_r$ where $\eps_r$ is rather small 
and hence the case of large $\eps$ remains open.  
\end{rem} 

\begin{rem}
The norm 
$$
\max_{|\beta|_1 \le r} \Vert D^\beta f \Vert_\infty 
$$
might be reasonable when we consider a cube $D_d = [0,1]^d$
but it is not invariant with respect to rotation. 
The function $f(x) = \sum x_i$ has a gradient with length 
$d^{1/2}$ as $g(x) = d^{1/2} \cdot x_1$, but all 
partial derivatives of $f$ are bounded by one. 
Therefore we also consider an orthogonal invariant norm. 
Since for this modified space $\wt C^r(D_d)$ 
many problems are still open, we only discuss the case $r=2$ 
in the following.
\end{rem} 

\begin{example} 
Let us discuss the integration problem for the class 
$$
\wt C^2 (D_d) = \{ f\in C^2 (D_d) \mid 
\Vert f \Vert_\infty \le 1, \, 
\Lip (f) \le d^{-1/2}, \, 
\Lip (D^\Theta f ) \le d^{-1} \}, 
$$
as in \cite{HNUW14a,HNUW14b,HPU19}.
Here $D^\Theta f$ denotes any directional derivative in a direction 
$\Theta \in S^{d-1}$ and 
$$
\Lip (g) = \sup_{x,y\in D_d} \frac{|g(x)-g(y)|}{\Vert x-y\Vert_2} .
$$ 
	Again we assume that $D_d \subset \R^d$ is an \change{open set} 
	with $\lambda^d(D_d) = 1$. 
We conjecture that there exists a constant $C>0$ (independent on $d$ and $D_d$) such that 
\begin{equation}  \label{123} 
e_n( \wt C^2 (D_d) , \INT, \lstd) \ge C \cdot n^{-2/d} .
\end{equation} 

Some comments are in order: 

1) This would be another explicit uniform lower bound and, because of known upper bounds 
for the cube, it certainly cannot be improved. 

%  2) The lower bound is, so far, not even known for the cube $D_d=[0,1]^d$. 
%  Nevertheless there are some partial results or signs that this could be true. 
%  We mention a few. 

2) It is known that the integration problem for $\wt C^2 (D_d)$ and 
\emph{certain} $D_d$ suffers from the curse 
of dimensionality. This is true if $D_d$ has a small radius, see \cite{HPU19} for the 
best known results. 
For example, the curse is known if $D_d$ is a $\ell_p^d$ ball and $p \ge 2$. 
It is not known for $p$ balls and $p<2$.  

3) It is easy to see that the lower bound \eqref{123} is true if 
$D_d$ is a disjoint union of $n$ euclidean balls of the same size. 
But, of course, this domain is not extremal for the given norm. 

4) Assume that the $x_i$ form a grid
\change{and $D_d$ is a cube}. Then, for $d=1$, one may take a 
quadratic spline $f_1$ as a fooling function and for $d>1$ one can take a 
fooling function of the form 
$$
f_d(x) = \frac{1}{d} \sum_{i=1}^d f_1 (x^i) ,
$$
where $x= (x^1, \dots , x^d)$. 
	\change{Hence we obtain exactly the lower bound \eqref{123} in this case. 
	Loosely speaking, 
	the conjecture says that arbitrary $D_d$ and function values 
	at arbitrary points $x_1, x_2, \dots , x_n$  do not lead to much better 
	error bounds than grids for cubes. 
	See Theorem~\ref{XX} 
	and
	\cite{HNUW17}
	for such a result 
	for slightly 
	different norms.}

	\end{example}

\begin{rem} 
	Optimal recovery for  $C^2$ functions on general 
	domains was also studied in \cite{BBS10}. 
	The authors use function values and values of the gradient 
	as information and prove asymptotic results that, again, only depend on 
	the size of $D_d$.
\end{rem}

\begin{rem} 
Consider the compact embedding of $W^r_p(D_d)$  into $L_q(D_d)$
for a bounded Lipschitz domain $D_d$.  
For $\lall$ it is known that the rate of convergence 
for the approximation numbers (error of optimal linear algorithms) 
and the Gelfand numbers (up to a factor two the error 
	of optimal algorithms) do not depend on $D_d$, \change{ see, e.g., 
	\cite{NT06}. } 
The same is known 
for $\lstd$ and then the optimal order 
is the same for linear and nonlinear algorithms. 
The optimal order is 
$$
e_n(W^r_p(D_d), \APP_q , \lstd) \asymp n^{-r/d + (1/p-1/q)_+} 
$$
for all bounded Lipschitz domains, 
see \cite{NWW04,NT06}, 
whenever the Sobolev space is embedded into $C$.
There are many common equivalent norms for the Sobolev space, 
we may take 
$$
\Vert f \Vert_{W^r_p(D_d)}^p = \sum_{|\alpha|_1  \le r } \Vert D^\alpha f \Vert_p^p .
$$
Open Problem: Is the asymptotic constant independent on the shape of $D_d$ 
and only depends on the volume of $D_d$? 

We may ask the same question for $\lall$ and also may distinguish between all algorithms and 
the class of linear algorithms. 
See the next section for the case $p=q=2$. 
\end{rem} 

\section{Arbitrary linear information} 

Weyl~\cite{Weyl}
proved that the asymptotic constant for the size of the eigenvalues 
of the Dirichlet Laplacian 
and also of the Neumann Laplacian do not depend on the shape of $D_d$,
it only depends on the volume of $D_d$; see \cite{St08} for an accessible 
(idea of the) proof.
\change{
For the study of the Dirichlet Laplacian one can assume 
that $D_d$ is an arbitrary bounded open set $D_d \subset \R^d$; 
for the study of the Neumann Laplacian one needs stronger assumptions, usually one assumes 
that $D_d$ is a bounded Lipschitz domain. } 
 
The results of Weyl were extended by many authors, see the surveys
by
Birman and Solomjak~\cite{BS79,BS80}. 
The papers by 
Birman and Solomjak~\cite{BS70} 
and 
Tulovsky~\cite{Tu72}
are important for the asymptotic constant of more general differential 
equations and boundary value problems. 
These results can be used, as explained in  
Mieth~\cite{Mi19}, 
to compute the asymptotic constant for Sobolev embeddings in the 
Hilbert space case, i.e., $p=q=2$. 

We consider the numbers 
$
e_n (H^r(D_d), \APP_2, \lall)
$, 
i.e., the approximation numbers of Sobolev embeddings. 
In addition to $H^r(D_d)$ we also consider the subspace $H_0^r(D_d)$, 
the closure of $C_0^\infty (D_d)$.  
We always assume that $D_d$ is a bounded (nonempty) \change{open set} 
in $\R^d$
and for the results concerning $H_0^r(D_d)$ this assumption is enough. 
When the whole space $H^r(D_d)$ is considered then one needs the extension 
property of $D_d$. \change{We may assume 
that $D_d$ is a bounded Lipschitz 
domain, but also more general domains are possible, see 
\cite{Ro06}}. 

We collect some results, most of them as in Mieth~\cite{Mi19},  
and add a little bit using results of Birman and Solomjak~\cite{BS70}. 

\begin{thm}  \label{thm4} 
	The asymptotic constant 
	$$ 
	\lim_{n \to \infty} e_n (H^r(D_d) , \APP_2, \lall) \cdot n^{r/d} \cdot \lambda (D_d)^{-r/d} 
	= C_{r,d} 
$$
exists and is independent of $D_d$ and also coincides with the asymptotic 
constant 
	$$ 
	\lim_{n \to \infty} e_n (H^r_0 (D_d) , \APP_2, \lall) \cdot n^{r/d} \cdot \lambda (D_d)^{-r/d} 
	= C_{r,d} 
$$
for the subspace with zero boundary values.  
\end{thm}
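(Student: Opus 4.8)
The plan is to translate the statement about approximation numbers into a spectral asymptotics statement for an elliptic operator of order $2r$ and then to invoke a Weyl-type law. By \eqref{eq3} the numbers $e_n(H^r(D_d),\APP_2,\lall)$ are the singular values $\sigma_{n+1}$ of the embedding $J\colon H^r(D_d)\to L_2(D_d)$, and similarly for $H^r_0(D_d)$. Since both spaces are Hilbert spaces, these singular values are governed by the eigenvalues of the positive self-adjoint operator $A_r$ on $L_2(D_d)$ associated via the min-max principle to the Rayleigh quotient $\|f\|_{H^r}^2/\|f\|_{L_2}^2$, namely the operator defined by $\langle A_r f, g\rangle_{L_2}=\langle f,g\rangle_{H^r}$. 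Writing $\mu_1\le\mu_2\le\cdots$ for its eigenvalues, one checks from the min-max characterizations that $\sigma_n=\mu_n^{-1/2}$, so that $e_n=\mu_{n+1}^{-1/2}$. The operator $A_r=\sum_{|\alpha|_1\le r}(-1)^{|\alpha|_1}D^{2\alpha}$ is elliptic of order $2r$ with principal symbol $\sum_{|\alpha|_1=r}\xi^{2\alpha}$, and the choice of $H^r$ versus $H^r_0$ corresponds precisely to natural (Neumann-type) versus Dirichlet boundary conditions for $A_r$.

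First I would record the Weyl asymptotics for the counting function $N(\lambda)=\#\{n:\mu_n\le\lambda\}$. For an elliptic operator of order $2r$ the Birman--Solomjak theory \cite{BS70} yields
$$
N(\lambda)\approx\gamma_{r,d}\,\lambda^d(D_d)\,\lambda^{d/(2r)}\qquad(\lambda\to\infty),
$$
where $\gamma_{r,d}$ is obtained by integrating the principal symbol over $D_d$ and the unit cosphere; since the top-order part of $\|f\|_{H^r}^2$ is translation invariant, this integral factors as a shape-independent constant depending only on $r$ and $d$, times the volume $\lambda^d(D_d)$. Inverting gives $\mu_n\approx\bigl(n/(\gamma_{r,d}\lambda^d(D_d))\bigr)^{2r/d}$, hence
$$
e_n=\mu_{n+1}^{-1/2}\approx\gamma_{r,d}^{\,r/d}\,\lambda^d(D_d)^{r/d}\,n^{-r/d},
$$
which is exactly the asserted form with $C_{r,d}=\gamma_{r,d}^{\,r/d}$ and with the constant manifestly independent of the shape of $D_d$.

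The remaining, and main, point is that the Dirichlet problem (for $H^r_0$) and the Neumann-type problem (for $H^r$) produce the \emph{same} leading constant $\gamma_{r,d}$, since this is what forces the two asymptotic constants to coincide. The standard route is Dirichlet--Neumann bracketing: because $H^r_0(D_d)\subset H^r(D_d)$, domain monotonicity gives a one-sided comparison of the two counting functions, and after partitioning $D_d$, up to a set of arbitrarily small measure, into a fine grid of small cubes, one sandwiches both counting functions between sums over the subcubes of Dirichlet and of Neumann counting functions. On cubes the two constants agree to leading order, so letting the mesh tend to zero forces the leading terms for $D_d$ to coincide; here the Jordan/Lipschitz hypothesis enters, in that the residual boundary layer has negligible volume. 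For $H^r_0$ any bounded open set suffices, whereas for the full space $H^r$ one needs the extension property of $D_d$ (a bounded Lipschitz domain, or the more general domains of \cite{Ro06}) to control the Neumann side. I expect this bracketing step --- carried out for an operator of order $2r$ rather than for the classical Laplacian, and at the stated level of generality for the domain --- to be the principal obstacle; the reduction of the first paragraph and the inversion of the second are routine once the appropriate form of the Birman--Solomjak asymptotics is in hand, and this is essentially the line taken by Mieth~\cite{Mi19}.
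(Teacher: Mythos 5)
Your proposal is correct and follows essentially the same route the paper takes: the paper proves Theorem~\ref{thm4} by reducing the approximation numbers via \eqref{eq3} to the spectral asymptotics of the associated elliptic operator of order $2r$ and citing the Weyl-type results of Birman and Solomjak \cite{BS70} as worked out in Mieth \cite{Mi19}, which is precisely the reduction, inversion, and Dirichlet--Neumann bracketing argument you describe. You have simply made explicit the details that the paper delegates to those references.
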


Hence the asymptotic constant $C_{r,d}$ does not depend on the boundary conditions 
and does not depend on the shape of $D_d$.
The norm in $H^r(D_d)$ can be 
given by $\Vert f \Vert^2 = \Vert f \Vert_2^2 + \sum_{|\alpha |_1 = r} \Vert D^\alpha f  
\Vert_2^2$ (or similar)   
and there are explicit formulas for $C_{r,d}$, see \cite{Mi19}. 

\begin{rem} 
	Theorem~\ref{thm4} is for Sobolev embeddings in the Hilbert space case.
	Here the error bounds coincide with approximation numbers or singular values. 
	It would be interesting to have similar results for embeddings 
	of $W^r_p(D_d)$ into $L_q(D_d)$ and general $p$ and $q$.  
\end{rem}

\begin{rem} 
Mieth \cite{Mi19} used results of 
Kr\"oger \cite{Kr92} and  
Li and Yau~\cite{LY83}
to prove explicit uniform upper bounds for 
$ e_n (H^r_0 (D_d) , \APP_2, \lall)$ and lower bounds for 
$ e_n (H^r (D_d) , \APP_2, \lall)$.
An extended \emph{Polya conjecture} for this case reads
	$$ 
e_n (H^r(D_d) , \APP_2, \lall) \cdot n^{r/d} \cdot \lambda (D_d)^{-r/d} 
\ge  C_{r,d} 
$$
and
	$$ 
e_n (H^r_0 (D_d) , \APP_2, \lall) \cdot n^{r/d} \cdot \lambda (D_d)^{-r/d} 
\le C_{r,d} 
$$
for all $n\in \N$. 
The known results, see Mieth~\cite{Mi19}, are only slightly weaker than 
these conjectured ones. 
\end{rem}

We finish the paper with a remark on $C^\infty$ functions.  

\begin{rem} 
Functions from the class $C^\infty$
with the norm $\Vert f \Vert := \sup_{\alpha \in \N_0^d} \Vert D^\alpha f \Vert_\infty$ 
are studied in \cite{NW09,We12}, see also Vyb\'\i ral~\cite{Vy14}. 
The curse is proved for domains 
$D_d$ of the form $D_d = [a,b]^d$, where $b-a>0$ can be small 
but is independent of $d$. 
By the proof technique it is clear that all proved lower bounds also hold for larger 
domains. Nevertheless, the proof does not cover all $D_d$ with a size 
$\lambda^d (D_d) \ge \alpha^d$ and it would be interesting to know whether the curse 
also holds for these more general domains.

If one assumes that all directional derivatives of all orders are bounded by 
one and $D_d = [0,1]^d$ then one can prove the weak tractability of the
integration problem using the 
Clenshaw-Curtis Smolyak algorithm, see \cite{HNU14}.
\change{
This result was extended by G.~Xu~\cite{Xu} who proved weak tractability for the same 
class and $L_q$-approximation for $q<\infty$. The used algorithm is a Smolyak algorithm 
introduced in \cite{BNR}.
The case of $L_\infty$-approximation is still open, hence this is another unsolved problem. 
} 
\end{rem}

\medskip

{\bf Acknowledgment.}  \ 
I thank several friends, colleagues and referees for valuable comments.

\end{document}